\DeclarePairedDelimiter\floor{\lfloor}{\rfloor}
\newcommand{\Z}{\mathbb{Z}}
\newcommand{\R}{\mathbb{R}}
\newcommand{\F}{\mathbb{F}}
\newcommand{\Span}{\text{Span}}
\newcommand{\wt}{\text{wt}}
\renewcommand{\P}[2]{P_{#1}^{(#2)}}
\theoremstyle{plain}
\newtheorem{theorem}{Theorem}[section] 
\newtheorem{proposition}[theorem]{Proposition}
\theoremstyle{definition}
\newtheorem{defn}[theorem]{Definition} 
\newtheorem{exmp}[theorem]{Example} 
\newtheorem{rmk}[theorem]{Remark}
\title{Classifying toric surface codes of dimension $7$}
\author[E.~Cairncross]{Emily Cairncross}
\address{Emily Cairncross \\ Oberlin College \\ Oberlin, Ohio \\ USA
\href{mailto:emily.cairncross@oberlin.edu}%
  {{\ttfamily\upshape emily.cairncross@oberlin.edu}}}
  \author[S.~Ford]{Stephanie Ford}
\address{Stephanie Ford\\ Texas A\&M \\ College Station, Texas \\ USA
\href{mailto:stephanieford@tamu.edu }%
  {{\ttfamily\upshape stephanieford@tamu.edu }}}
   \author[E.~Garcia]{Eli Garcia}
\address{Eli Garcia\\ M.I.T. \\ Cambridge, Massachusetts \\ USA
\href{mailto: etgarcia02@gmail.com  }%
  {{\ttfamily\upshape etgarcia02@gmail.com  }}}
\author[K.~Jabbusch]{Kelly Jabbusch}
\address{Kelly Jabbusch\\ Department of Mathematics \& Statistics\\ University
  of Michigan--Dearborn \\ Dearborn, Michigan
 \\ USA \\ 
  \href{mailto:jabbusch@umich.edu}%
  {{\ttfamily\upshape jabbusch@umich.edu}}}
\begin{document}

\maketitle

\begin{abstract}
    Toric surface codes are a class of error-correcting codes coming from a lattice polytope defining a two-dimensional toric variety. Previous authors have mostly completed classifications of these toric surface codes with dimension up to $k = 7.$ In this note, we correct an error in the classification of the $k=7$ case started in \cite{HLYZZ}, and disprove one of their conjectures.   
    
 \end{abstract}

\section{Introduction}

A specific toric code is constructed by first electing a finite field of order $q$ (where $q$ is a prime power) and a lattice convex polytope with $k$ lattice points (note that for the purposes of this paper, we only consider $2$-dimensional polytopes, or polygons, but toric codes \textit{can} be generated by higher-dimensional lattice convex polytopes). When a toric code is constructed from a polygon, which corresponds to a two-dimensional toric variety, we call it a toric surface code. A generator matrix can then be constructed involving both the elected field and polygon, and then the code consists of the set of linear combinations of the rows of the generator matrix. Given a toric code, we consider three parameters: \begin{itemize}

\vspace{.1 cm}

    \item The length of a codeword, which is $n = (q-1)^2$.
    
    \vspace{.2 cm}
    
    \item The dimension of the code, which is $k$.
    
    \vspace{.2 cm}
    
    \item The minimum Hamming distance $d$ of the code (Hamming distance counts the number of indices at which two codewords differ), which varies depending on the shape of the polygon. The greater the minimum distance, the more errors the code can correct. For example, if a codeword from a code with minimum distance 5 contained two errors, then that codeword would be closer to the intended codeword than any other codeword. So, those errors could be corrected.
  
\end{itemize}

\vspace{.1 cm}

The ideal code would have $n$ small (long codewords are hard to work with computationally) and $d$ large so that the code can correct as many errors as possible. For this reason, classifying toric codes based on their dimension is useful in finding patterns as to what shapes give better codes. To do this, one first finds all polygons that generate codes of dimension $k$ and computes their minimum distances. Then, various methods can be used to separate codes with the same minimum distance. This work was initiated by \cite{LSchwarz}, which classified toric surface codes with dimension $k\leq 5.$ The $k=5$ case was completed in \cite{dim5} and the $k=6$ case was done in \cite{LYZZComplete}. In \cite{HLYZZ}, the $k=7$ case was mostly completed. We achieved the results of \cite{HLYZZ} independently, and in this note we focus on correcting an error in the classification given by  \cite{HLYZZ} and disproving one of their conjectures. The strategy to classify toric surface codes with dimension $k=7$ follows the strategy given in the $k\leq 6$ cases.  One first determines the possible polygons with $7$ lattice points: 

\vspace{.1 cm}

\begin{theorem}\label{theorem:main1}  Every toric surface code with $k=7$ is monomially equivalent to a code generated by one of the $22$ polygons in Figure \ref{fig:k7}.
\end{theorem}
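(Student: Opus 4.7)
My plan is to reduce the theorem to a finite combinatorial classification of lattice polygons with $7$ lattice points and then carry out that classification.

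I would begin by invoking the standard fact, used in the $k\le 6$ classifications referenced in the introduction, that two lattice polygons related by an element of $\mathrm{AGL}_2(\mathbb{Z}) = \mathbb{Z}^2 \rtimes GL_2(\mathbb{Z})$, i.e.\ a translation composed with a unimodular transformation, give rise to monomially equivalent toric codes. This reduces the theorem to the statement that there are exactly $22$ lattice polygons with $7$ lattice points up to $\mathrm{AGL}_2(\mathbb{Z})$-equivalence, each represented by one of the polygons in Figure \ref{fig:k7}.

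Next, I would normalize a representative of each class by applying a unimodular transformation that realizes the lattice width of the polygon in the vertical direction, so the polygon lies in a horizontal strip $\mathbb{R}\times[0,w]$ with $w$ equal to the lattice width; a translation then places a bottom-row lattice point at the origin. A short argument shows the lattice width is bounded by a small function of $k$: a two-dimensional polygon in a strip of width $w$ contains at least one lattice point on every row and strictly more than $w+1$ lattice points once convexity is enforced, so for $k=7$ only finitely many (and very few) widths need to be examined. The degenerate collinear case contributes no two-dimensional polygon.

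For each admissible width $w$, the polygon is determined by its row-intervals $[a_j,b_j]\cap\mathbb{Z}$ for $j=0,1,\dots,w$, subject to two concavity conditions, namely that $(a_j)$ is convex and $(b_j)$ is concave, so that the convex hull of the rows is itself a convex polygon realizing the prescribed lattice point set. I would enumerate all such row-interval sequences whose lattice-point totals sum to $7$, and then quotient by the residual symmetries of the normalization (horizontal translation, horizontal shears fixing the strip, reflection across the $x$-axis, and reflection across a vertical line) to produce one representative per equivalence class. Comparing against Figure \ref{fig:k7} should yield exactly the $22$ polygons listed.

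The main obstacle is the bookkeeping in the last step: distinct row-interval sequences can become equivalent after shearing, and conversely a sequence may coincide with its own image under the residual group, so ensuring both completeness and non-redundancy of the list is the delicate part on which the count of exactly $22$ hinges. To cross-check the enumeration I would compute, for each representative, $\mathrm{AGL}_2(\mathbb{Z})$-invariants that distinguish the classes pairwise---the vertex count, the numbers of interior and boundary lattice points constrained by Pick's theorem $\mathrm{Area}=I+B/2-1$, and the multiset of primitive edge directions up to $GL_2(\mathbb{Z})$---and confirm no two representatives share the same invariant tuple.
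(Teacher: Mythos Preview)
Your outline is sound and would lead to a correct classification, but it takes a genuinely different route from the paper. The paper does not enumerate $7$-point lattice polygons from scratch via lattice-width stratification; instead it leverages the known $k=6$ classification: since every convex lattice polygon with seven lattice points contains one with six, the authors run through the fourteen representatives $P_6^{(i)}$, list for each the finitely many lattice points that can be adjoined without creating an eighth lattice point in the convex hull, and then sort the resulting $7$-point polygons into lattice-equivalence classes (their Table~\ref{table:1}). Your width-based enumeration is self-contained and does not depend on any prior classification, which is a virtue if one wants an independent check; the paper's inductive ``add one point'' method is faster in practice because the $k=6$ list is already in hand and the search around each $P_6^{(i)}$ is short.

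Two small cautions about your plan. First, the paper's list of $22$ includes the degenerate segment $P_7^{(1)}$ of seven collinear points, so your collinear case must be retained and counted, not set aside. Second, the invariant tuple you propose for the final cross-check (vertex count, $I$, $B$, multiset of primitive edge directions) will not separate all $22$ classes pairwise---several of the polygons in Figure~\ref{fig:k7} share these coarse data---so to certify inequivalence you would still need to exhibit explicit unimodular maps or appeal to finer invariants, which is exactly the bookkeeping you flag as the main obstacle.
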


\vspace{.1 cm}

We will formally define what it means for two toric codes to be \emph{monomially equivalent} in Definition \ref{defn:mon equiv}, but the important thing is that monomially equivalent codes share values for all three parameters $n$, $k$, and $d$.

\begin{figure}  \label{fig:k7}  \caption{Lattice equivalence classes with $k=7$ lattice points}
    \centering
     \includegraphics[scale=1]{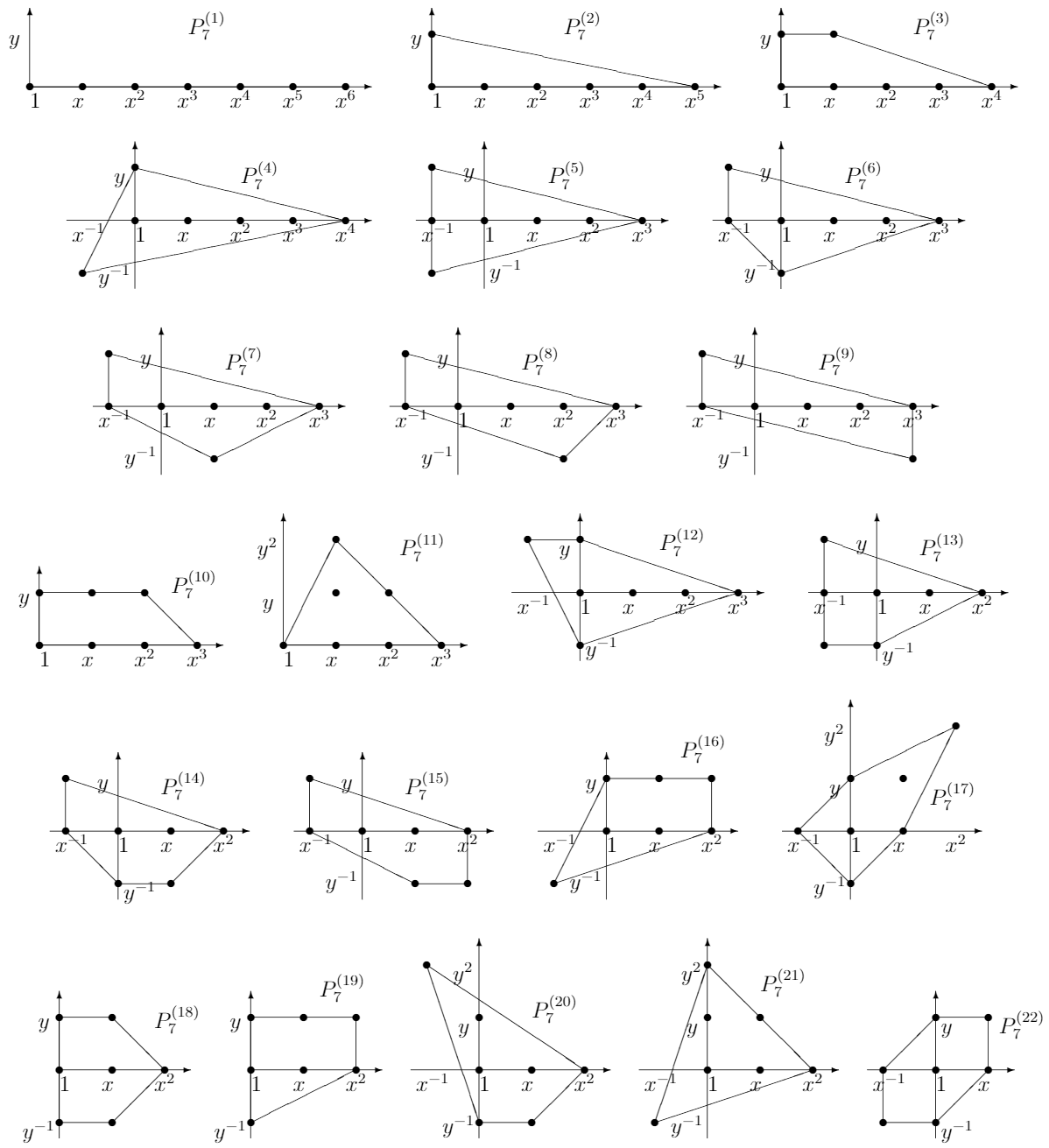}
\end{figure}

These $22$ polygons generate monomially inequivalent codes over $\F_q$ for most $q$, however, as in the case with $k=6$, we do find some cases in which two lattice inequivalent polygons generate monomially equivalent codes, as well as two open cases over $\F_8$.  More precisely:

\begin{theorem}\label{theorem:main2} These $22$ polygons generate monomially inequivalent codes over $\F_q$, for all $q$, with the following exceptions: \begin{enumerate}
    \item[(a)] $P_{7}^{(4)}$ and $P_{7}^{(8)}$ yield monomially equivalent  codes over $\F_7$, 
    \item[(b)] $P_{7}^{(5)}$ and $P_{7}^{(7)}$ yield monomially equivalent  codes over $\F_7$, 
    \item[(c)] $P_{7}^{(12)}$ and $P_{7}^{(13)}$ yield monomially equivalent  codes over $\F_7$, \end{enumerate}
The monomial equivalence of the following two cases remains open:
\begin{enumerate}
    \item[(d)] $P_{7}^{(4)}$ and $P_{7}^{(7)}$ over $\F_8$, 
    \item[(e)] $P_{7}^{(6)}$ and $P_{7}^{(5)}$ over $\F_8$. 
 \end{enumerate}
\end{theorem}

After the completion of this project, we learned that \cite{HLYZZ} had similar results. However, we answer one of their open cases and show that $P_{7}^{(10)}$ and $P_{7}^{(19)}$ yield monomially inequivalent codes over $\F_{29}$.  Many of our methods are similar, as we both extended the work of \cite{LYZZShort}, however we correct an error in their result regarding the minimum distances (Theorem \ref{theorem:mindist}).

The paper is organized as follows: in Section \ref{sec:prelim} we will first give an overview of definitions and previous results we will need to compute minimum distances.  In Section \ref{sec:proofs} we compute the minimum distances of the codes given by our $22$ polygons, correcting an error in the minimum distance formulas of \cite{HLYZZ}; and then complete the classification of the toric surface codes of dimension $k=7$, with the exception of the two cases over $\F_8$.  Finally, we end with a third remark  about toric surface codes of dimension $k=8$.

\section{Preliminary definitions and previous results}\label{sec:prelim}

Toric codes are a class of linear error-correcting codes introduced by Hansen in \cite{Hansen}. To construct such a code over the finite field $\F_q,$ we take a lattice convex polytope (i.e. the convex hull of a set of lattice points) $P \subset \square_{q-1} = [0, q-2]^m \subset \R^m.$ Then the toric code $C_P(\F_q)$ is given by the generator matrix defined by the following.

\begin{defn}
Let $\F_q$ be a finite field and $P \subset \square_{q-1} \subset \R^m$ be a lattice convex polytope. Write $\#(P) = |P \cap \Z^m|$ so that $\#(P)$ is the number of lattice points both on the boundary of and within the polytope. Then the \textit{toric code} $C_P(\F_q)$ is the linear code of block length $(q-1)^m$ given by the $\#(P) \times (q-1)^m$ generator matrix defined: $$G = (a^p),$$ for each $a \in (\F_q^{\ast})^m$ and each $p \in P \cap \Z^m,$ where $a^p = a_1^{p_1} \cdots a_m^{p_m}$ for $a = (a_1, \dots, a_m)$ and $p = (p_1, \dots, p_m).$

\end{defn}

\noindent Equivalently, $C_P(\F_q)$ can be defined as the image of an evaluation map. Let $$ \mathcal{L}(P) = \Span_{\F_q} \{x_1^{p_1}x_2^{p_2}\cdots x_m^{p_m} \colon p = (p_1, p_2, \ldots, p_m) \in P \cap \Z^m \},$$ then $C_P(\F_q)$ is the image of the map 
\begin{align*}
    \mathbf{\varepsilon} \colon  \mathcal{L}(P) & \to \F_q^{(q-1)^m} \\
                                f & \mapsto (f(a) \colon a \in (\F_q^{*})^m).
\end{align*}

\begin{exmp}
Let $q=5$ and $m=2$, and consider the polytope $T \subset \R^2$ with the $k=4$ lattice points $(0,0), (1,0), (0,1)$ and $(-1,-1)$, shown below.
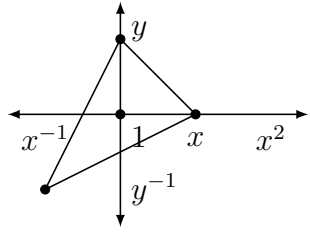
\begin{figure}[H] \label{fig:P4}
    \begin{center}
    \begin{tikzpicture}[scale=1]
    \draw [semithick, <->, >=latex] (-1.5,0) -- (2.5,0);
    \draw [semithick, <->, >=latex] (0,-1.5) -- (0,1.5);
    \node [below] at (-1,0) {$x^{-1}$};
    \node [right] at (0,1.1) {$y$};
    \node [below right] at (0,-0.02) {$1$};
    \node [right] at (0,-1) {$y^{-1}$};
    \node [below] at (1,-0.09) {$x$};
    \node [below] at (2,0) {$x^2$};
    \draw [semithick, black] (-1,-1) -- (0,1) -- (1,0) -- cycle;
    \draw [fill] (-1,-1) circle [radius=0.06];
    \draw [fill] (0,0) circle [radius=0.06];
    \draw [fill] (1,0) circle [radius=0.06];
    \draw [fill] (0,1) circle [radius=0.06];
    \end{tikzpicture}
    \end{center}
    \caption{The polytope $T$.} \label{figure:ExceptionalTriangle}
    \end{figure}
Then the toric code $C_{T}$ is the linear code given by the $4 \times 16$ generator matrix $G$, which we can calculate using every $a \in (\F_5^{\ast})^2$ and $p \in \{(-1,-1), \, (0,0), \, (1,0), \, (0,1)\}$. So let $g_{ij}$ be the element in $G$ such that $1 \leq i \leq 4$ and $1 \leq j \leq 16$. Each row of $G$ corresponds to a lattice point $p$, and each column corresponds to an element $a \in (\F_5^{\ast})^2$. For example, the first row of $G$ corresponds to the lattice point $(0,0)$, and the first column corresponds to $(1,1) \in (\F_5^{\ast})^2$. Thus, $g_{11} = 1^0 \cdot 1^0 = 1$. Since we take each element in $a \in (\F_5^{\ast})^2$ to the $0^{th}$ power, the first row of $G$ will be all 1's, i.e.
\begin{align*}
g_{1j} & =
\left( {\begin{array}{cccccccccccccccc}
    1 & 1 & 1 & 1 & 1 & 1 & 1 & 1 & 1 & 1 & 1 & 1 & 1 & 1 & 1 & 1 \\
    \end{array} } \right).
\end{align*}
The seventh column of $G$ corresponds to $(2,3) \in (\F_5^{\ast})^2$, and each row corresponds to (0,0), (1,0), (0,1), and (-1,-1), respectively. Thus we can calculate the seventh column of $G$.
\begin{align*}
    g_{17} = 2^0 \cdot 3^0 = 1 & \qquad g_{37} = 2^0 \cdot 3^1 = 3 \\
    g_{27} = 2^1 \cdot 3^0 = 2 & \qquad g_{47} = 2^{-1} \cdot 3^{-1} = 1
\end{align*}
Once we compute each element of $G$, we get

\begin{align*}
G = (a^p)
& =
\left( {\begin{array}{cccccccccccccccc}
    1 & 1 & 1 & 1 & 1 & 1 & 1 & 1 & 1 & 1 & 1 & 1 & 1 & 1 & 1 & 1 \\
    1 & 1 & 1 & 1 & 2 & 2 & 2 & 2 & 3 & 3 & 3 & 3 & 4 & 4 & 4 & 4 \\
    1 & 2 & 3 & 4 & 1 & 2 & 3 & 4 & 1 & 2 & 3 & 4 & 1 & 2 & 3 & 4 \\
    1 & 3 & 2 & 4 & 3 & 4 & 1 & 2 & 2 & 1 & 4 & 3 & 4 & 2 & 3 & 1 \\
    \end{array} } \right).
\end{align*}
Also note that $\mathcal{L}(T) = \Span_{\F_5}\{1, x, y, x^{-1}y^{-1}\}$.
\end{exmp}

\noindent From now on, we will generally omit the reference to $\F_q$ and write only $C_P.$ Realize that the weight of a codeword $w = \varepsilon(f) \in C_P$ is simply $$\wt(w) = (q-1)^m - Z(f),$$ where $Z(f)$ is the number of points in $(\F_q^*)^m$ at which $f$ vanishes. Hence, the minimum weight of $C_P$ is given by $$d(C_P) = (q-1)^m - \max_{0 \neq f \in L} Z(f).$$

We will focus on codes arising from toric surfaces, so $m=2$. We will classify the toric surface codes of dimension $7$ according to monomial equivalence, the precise definition is as follows:

\begin{defn}\label{defn:mon equiv} Let $C_1$ and $C_2$ be two codes of length $n$ and dimension $k$ over $\F_q$, and let $G_1$ and $G_2$ be generator matrices for $C_1$ and $C_2$, respectively.  $C_1$ and $C_2$ are \emph{monomially equivalent} if there is an invertible $n \times n$ diagonal matrix $\Delta$ and an $n \times n$ permutation matrix $\Pi$ such that $G_2=G_1 \Delta \Pi$. \end{defn}

In general it is difficult to check if two codes are monomially equivalent from the definition.  Little and Schwarz \cite{LSchwarz} give a more practical test for determining if two polytopes give the same code.  We first define lattice equivalence of polytopes:

\begin{defn} Two lattice convex polytopes $P_1$ and  $P_2$ in $\Z^m$ are \emph{lattice equivalent} if there exists a unimodular affine transformation $T : \R^m \to \R^m$ defined by $T(\mathbf{x}) = M\mathbf{x} + \lambda$ where $M \in \text{GL}(m, \Z)$ and $\lambda \in \Z^m$ such that $T(P_1) = P_2.$ \end{defn}

\begin{exmp} The two polygons labeled $P_1$ and $P_2$ below are lattice equivalent via $T(\mathbf{x}) = M\mathbf{x} + \lambda$ where $M= \begin{bmatrix} -1&0 \\ 0&1 \end{bmatrix}$ gives the reflection about the $y$-axis and $\lambda = \begin{bmatrix} 1 \\0 \end{bmatrix}$ gives the translation one unit to the right.

\begin{figure}[H]
    \begin{center}
    \begin{tikzpicture}[scale=1]
   \draw [semithick, <->, >=latex] (-1.5,0) -- (2.5,0);
    \draw [semithick, <->, >=latex] (0,-1.5) -- (0,1.5);
    \node [below] at (-1,0) {$x^{-1}$};
    \node [right] at (0,1.2) {$y$};
    \node [below right] at (0,-0.02) {$1$};
    \node [right] at (0.1,-1) {$y^{-1}$};
    \node [below] at (1,-0.09) {$x$};
    \node [below] at (2,0) {$x^2$};
    \node at (-1,1) {$P_1$};

    \draw [semithick, black] (-1,-1) -- (0,-1) -- (2,0) -- (1,1) -- (0,1)-- cycle;
    \draw [fill] (-1,-1) circle [radius=0.06];
    \draw [fill] (0,0) circle [radius=0.06];
    \draw [fill] (1,0) circle [radius=0.06];
    \draw [fill] (0,1) circle [radius=0.06];
    \draw [fill] (1,1) circle [radius=0.06];
    \draw [fill] (0,-1) circle [radius=0.06];
    \draw [fill] (2,0) circle [radius=0.06];

    \end{tikzpicture}    \hspace{.5in}
\begin{tikzpicture}[scale=1]
   \draw [semithick, <->, >=latex] (-1.5,0) -- (2.5,0);
    \draw [semithick, <->, >=latex] (0,-1.5) -- (0,1.5);
    \node [below] at (-1,0) {$x^{-1}$};
    \node [right] at (0,1.2) {$y$};
    \node [below right] at (0,-0.02) {$1$};
    \node [right] at (0,-1.1) {$y^{-1}$};
    \node [below] at (1,-0.09) {$x$};
    \node [below] at (2,0.05) {$x^2$};
    \node at (-1,1) {$P_2$};
    \draw [semithick, black] (-1,0) -- (1,-1) -- (2,-1) -- (1,1) -- (0,1)-- cycle;
    \draw [fill] (-1,0) circle [radius=0.06];
    \draw [fill] (0,0) circle [radius=0.06];
    \draw [fill] (1,0) circle [radius=0.06];
    \draw [fill] (0,1) circle [radius=0.06];
    \draw [fill] (1,1) circle [radius=0.06];
    \draw [fill] (1,-1) circle [radius=0.06];
    \draw [fill] (2,-1) circle [radius=0.06];

    \end{tikzpicture}
    \end{center}
    \caption{Lattice equivalent polygons via reflection and translation.} \label{figure:LatticeEquiv}
\end{figure}
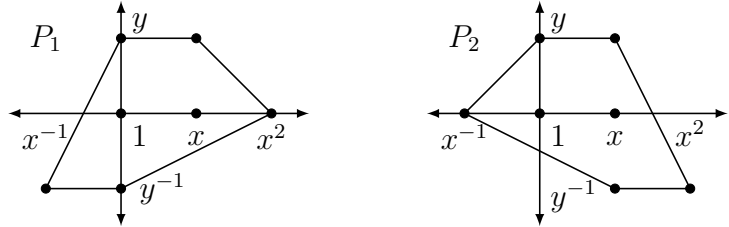
\end{exmp}

 \begin{exmp}
The two polygons $Q$ and $P_6^{(3)}$ below are also lattice equivalent via $T(\mathbf{x}) = M\mathbf{x} + \lambda$ where $M= \begin{bmatrix} 1&-1 \\ 0&1 \end{bmatrix}$ gives the shear to the left and $\lambda = \begin{bmatrix} 0 \\0 \end{bmatrix}$ (i.e., no translation).
 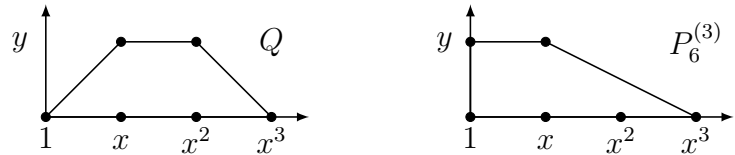
\begin{figure}[H]
    \begin{center}
    \begin{tikzpicture}[scale=1]
   \draw [semithick, ->, >=latex] (0,0) -- (3.5,0);
    \draw [semithick, ->, >=latex] (0,0) -- (0,1.5);
    \node [left] at (-0.09,1) {$y$};
    \node [below] at (0,0) {$1$};
    \node [below] at (1,-0.1) {$x$};
    \node [below] at (2,0) {$x^2$};
    \node [below] at (3,0) {$x^3$};
    \node at (3,1) {$Q$};

    \draw [semithick, black] (0,0) -- (1,0) -- (2,0) -- (3,0) -- (2,1) -- (1,1) -- cycle;
    \draw [fill] (0,0) circle [radius=0.06];
    \draw [fill] (1,0) circle [radius=0.06];
    \draw [fill] (2,0) circle [radius=0.06];
    \draw [fill] (3,0) circle [radius=0.06];
    \draw [fill] (1,1) circle [radius=0.06];
    \draw [fill] (2,1) circle [radius=0.06];

    \end{tikzpicture} \hspace{.5in}
     \begin{tikzpicture}[scale=1]
   \draw [semithick, ->, >=latex] (0,0) -- (3.5,0);
    \draw [semithick, ->, >=latex] (0,0) -- (0,1.5);
    \node [left] at (-0.09,1) {$y$};
    \node [below] at (0,0) {$1$};
    \node [below] at (1,-0.1) {$x$};
    \node [below] at (2,0) {$x^2$};
    \node [below] at (3,0) {$x^3$};
    \node at (3,1) {$P_6^{(3)}$};

    \draw [semithick, black] (0,0) -- (1,0) -- (2,0) -- (3,0) -- (1,1) -- (0,1) -- cycle;
    \draw [fill] (0,0) circle [radius=0.06];
    \draw [fill] (1,0) circle [radius=0.06];
    \draw [fill] (2,0) circle [radius=0.06];
    \draw [fill] (3,0) circle [radius=0.06];
    \draw [fill] (1,1) circle [radius=0.06];
    \draw [fill] (0,1) circle [radius=0.06];

    \end{tikzpicture}
    \end{center}
    \caption{Lattice equivalent polygons via shear.} \label{figure:LatticeEquiv2}
\end{figure}
 \end{exmp}

\begin{theorem}[\cite{LSchwarz}]  If two polytopes $P_1$ and $P_2$ are lattice equivalent, then the toric codes $C_{P_1}$ and $C_{P_2}$ are monomially equivalent.  \end{theorem}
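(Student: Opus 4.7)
The plan is to construct the monomial equivalence explicitly from the given unimodular affine map $T(\mathbf{x}) = M\mathbf{x} + \lambda$ with $T(P_1) = P_2$. First, I would note that $T$ restricts to a bijection $P_1 \cap \Z^m \to P_2 \cap \Z^m$: it carries lattice points to lattice points because $M$ and $\lambda$ are integral, and $T^{-1}(\mathbf{x}) = M^{-1}\mathbf{x} - M^{-1}\lambda$ does too because $\det M = 1$ forces $M^{-1} \in \mathrm{SL}(m,\Z)$. Consequently $\#(P_1) = \#(P_2)$, and I can index the rows of both $G_1$ and $G_2$ by $p \in P_1 \cap \Z^m$, agreeing that the row of $G_2$ labeled $p$ corresponds to the lattice point $T(p) \in P_2$.

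Next, I would set up a bijection on the columns (indexed by evaluation points). Define $\sigma \colon (\F_q^*)^m \to (\F_q^*)^m$ by $\sigma(a)_j = \prod_{i=1}^m a_i^{m_{ij}}$ for each $j$; this is well defined since $M$ has integer entries, and the analogous formula using $M^{-1}$ is a two-sided inverse, so $\sigma$ is a bijection on $(\F_q^*)^m$. The key identity is then
\begin{equation*}
a^{T(p)} = a^{Mp + \lambda} = a^\lambda \cdot a^{Mp} = a^\lambda \cdot \prod_j\Bigl(\prod_i a_i^{m_{ij}}\Bigr)^{p_j} = a^\lambda \cdot \sigma(a)^p,
\end{equation*}
obtained by expanding $(Mp)_i = \sum_j m_{ij} p_j$ and regrouping the exponents of the $a_i$.

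To finish, I would let $\Delta$ be the diagonal $n \times n$ matrix with $\Delta_{a,a} = a^\lambda$ and let $\Pi$ be the $n \times n$ permutation matrix on columns associated to $\sigma$. Since each $a^\lambda \in \F_q^*$, $\Delta$ is invertible. By the identity above, the $(p,a)$ entry of $G_1 \Delta \Pi$ is $\sigma(a)^p \cdot a^\lambda = a^{T(p)}$, which is exactly the $(p,a)$ entry of $G_2$; hence $G_2 = G_1 \Delta \Pi$, giving the required monomial equivalence. The main thing to be careful about is the bookkeeping in this last step: one must orient $\Pi$ so that it pulls the column labeled $\sigma(a)$ into position $a$ while $\Delta$ scales by $a^\lambda$ (rather than $\sigma(a)^\lambda$); swapping the order of $\Delta$ and $\Pi$ still yields a monomial equivalence, but with a different diagonal matrix.
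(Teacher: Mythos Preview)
Your argument is correct and is the standard proof of this fact. Note that the paper itself does not supply a proof: the theorem is simply quoted from Little--Schwarz \cite{LSchwarz}, so there is nothing in the paper to compare against beyond observing that your construction---pushing the affine map $T$ through to a monomial change of coordinates on $(\F_q^*)^m$ via $\sigma(a)_j=\prod_i a_i^{m_{ij}}$ and a diagonal scaling by $a^\lambda$---is exactly the argument Little and Schwarz give.

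One small bookkeeping point you already flag but do not quite resolve: with $\Delta_{a,a}=a^\lambda$ and $\Pi$ oriented to send column $\sigma(a)$ to position $a$, the $(p,a)$ entry of $G_1\Delta\Pi$ is $\sigma(a)^p\cdot\sigma(a)^\lambda$, not $\sigma(a)^p\cdot a^\lambda$. The clean fix is to write $G_2=G_1\Pi\Delta$ (permute first, then scale), which gives $\sigma(a)^p\cdot a^\lambda=a^{T(p)}$ on the nose; since $\Pi\Delta=\Delta'\Pi$ for an appropriate diagonal $\Delta'$, this is still of the form required by the definition. This does not affect the validity of your argument.
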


Luo, Yau, Zhang, and Zuo \cite{LYZZShort} (and for more details \cite{LYZZComplete}) classified toric surface codes of dimension $k=6$, and found that for small $q$, it was possible that two polytopes could be lattice inequivalent but still yield monomially equivalent codes (a phenomena which didn't occur for $k<6$).  

\begin{theorem}
    [Luo, Yau, Zhang, and Zuo \cite{LYZZShort}] Every toric surface code with $k = 6$, is monomially equivalent to one of $14$ polygons, denoted by $C_{P_6^{(i)}}$ for $1\leq i \leq 14$.  Furthermore, $C_{P_6^{(i)}}$ and $C_{P_6^{(j)}}$ are not monomially equivalent over $\F_q$ for all $q \geq 7,$ except that
    \begin{enumerate}
        \item $C_{P_6^{(5)}}$ and $C_{P_6^{(6)}}$ over $\F_7$ are monomially equivalent;
        \item the monomial equivalence of $C_{P_6^{(4)}}$ and $C_{P_6^{(5)}}$ over $\F_8$ remains open.
    \end{enumerate}
\end{theorem}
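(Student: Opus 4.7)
The plan is to attack the theorem in two phases: first, enumerate lattice equivalence classes of convex lattice polygons with exactly $6$ lattice points; second, analyze the $\binom{N}{2}$ pairs of resulting codes and isolate the finite list of monomial equivalences over small fields.

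For the enumeration phase, I would bound the lattice width of a $6$-point convex lattice polygon. By the classical observation that a lattice polygon of width $\geq 2$ in both directions contains at least a fixed number of interior/boundary points (concretely, by placing $P$ in reduced form via an $\mathrm{SL}(2,\Z)$ transformation so that its bounding box has minimal area), one can confine $P$ up to unimodular affine equivalence to a finite list of bounding rectangles of small dimensions. Within each such bounding rectangle I would enumerate convex hulls of $6$-subsets of $\Z^2$ by brute force, discarding those whose total lattice-point count (interior plus boundary) is not $6$. Finally I would quotient the resulting list by the action of $\mathrm{SL}(2,\Z) \ltimes \Z^2$, using invariants such as number of vertices, number of edges of each lattice length, and normalized volume to make the check tractable. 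This phase is combinatorial and amounts to case analysis, yielding Figure~\ref{fig:k6}.

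For the inequivalence phase, I would exploit the fact that monomial equivalence preserves the full weight distribution of a code. For each lattice class $P_6^{(i)}$, I would compute the weight enumerator $W_{C_{P_6^{(i)}}}(z)$ over $\F_q$ by enumerating evaluations of polynomials in $\mathcal{L}(P_6^{(i)})$ at points of $(\F_q^\ast)^2$, recording the zero counts. If $W_{C_{P_6^{(i)}}} \neq W_{C_{P_6^{(j)}}}$, the codes are monomially inequivalent. For large $q$ one can often argue uniformly: the number of zeros of a bivariate polynomial supported on $P$ has asymptotic behavior controlled by $\#(P)$ and the Newton polygon, so discrepancies in the list of possible zero counts persist for all $q \geq q_0$ for some modest threshold. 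Handling the small fields $\F_7, \F_8, \F_9$ then reduces to a direct computation, flagging the two exceptions.

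The hard part, I expect, is the exceptional list itself. For the two coincidences \emph{(1)} $C_{P_6^{(5)}} \simeq C_{P_6^{(6)}}$ over $\F_7$ (monomially equivalent despite lattice inequivalence), I would need to produce an explicit monomial equivalence, i.e.\ an invertible diagonal matrix $\Delta$ and a permutation matrix $\Pi$ with $G_2 = G_1 \Delta \Pi$; this typically comes from noticing that multiplication by a carefully chosen element of $(\F_q^\ast)^2$ together with a Frobenius-type symmetry carries one evaluation set to the other. For the genuinely open case \emph{(2)} $C_{P_6^{(4)}}$ vs.\ $C_{P_6^{(5)}}$ over $\F_8$, matching weight enumerators rules out the easy invariant, so one would have to look at finer data: the automorphism group of the code, support-weight enumerators, or the covering radius. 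I would anticipate that either a hidden monomial equivalence exists and must be constructed explicitly, or else a sufficiently refined invariant (e.g.\ the number of minimum-weight codewords whose supports form a combinatorial design of a given type) distinguishes them; this is where routine computation no longer suffices and structural insight is required.
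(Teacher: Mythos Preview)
This theorem is not proved in the paper at all: it is quoted as a prior result of Luo, Yau, Zhang, and Zuo (references \cite{LYZZShort} and \cite{LYZZComplete}), and the paper simply cites it as background before moving on to the $k=7$ case. There is therefore no proof in the paper to compare your proposal against.

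That said, your outline is broadly in the spirit of how such classification results are obtained, and indeed of how the present paper handles $k=7$: enumerate lattice classes by building up from smaller polygons (the paper adds a point to each $P_6^{(i)}$ rather than bounding lattice width and brute-forcing a box), then separate codes first by minimum distance, then by finer weight-count invariants ($n_1$, $n_2$, $n_3$ in the paper's notation), and finally resort to direct Sage computation of weight enumerators and explicit equivalence checks for the small-$q$ stragglers. Your proposal to compute full weight enumerators from the start is heavier than what the paper actually does for large $q$---the paper instead isolates a handful of specific weights and counts codewords of just those weights via Newton-polygon and Hasse--Weil-type bounds (their Theorem~\ref{theorem:SSZero})---but the endgame for small fields is the same. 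Note also that the open case (2) in the statement is genuinely left open in \cite{LYZZShort}; your remark that ``structural insight is required'' is accurate, and no resolution is offered here either.
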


\subsection{Results to compute the minimum distance}

 Recall the Minkowski sum of two polytopes $P$ and $Q$ is the pairwise sum of points in $P$ and $Q:$ $P+Q = \{x+y \, |\, x\in P, y \in Q\}$. 

\begin{defn} Let $P$ be a lattice polytope with Minkowski decomposition $P = P_1 + \cdots + P_l$, where each $P_i$ has positive dimension.  The \emph{Minkowski length of $P$}, denoted $l(P)$, is the largest number of summands in such a decomposition.  The \emph{full Minkowski length of $P$} is the maximum of the Minkowski lengths of all subpolytopes in $P$: $L(P) := \max\{ l(Q) \, | \, Q \subset P\}$.  \end{defn}

Soprunov and Soprunova give the following bound on the minimum distance of a code, based on the full Minkowski length $L$.

\begin{theorem}[\cite{SS}]\label{theorem:SSboundExc} Let $P \subset \square_{q-1}$ be a lattice polygon with area $A$ and full Minkowski length $L.$ For $q \geq \max \left\{ 23, \big(c + \sqrt{c^2 + {5}/{2}}\big)^2 \right\}$, where $c = {A}/{2}-L +{9}/{4}$, the minimum distance of the toric surface code $C_P$ satisfies
\[d(C_p) \geq (q-1)^2-L(q-1)-2\sqrt{q} + 1.\]
\end{theorem}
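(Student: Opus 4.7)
The plan is to translate the minimum-distance bound into an upper bound on $Z(f) = |\{a \in (\F_q^*)^2 : f(a) = 0\}|$, using $d(C_P) = (q-1)^2 - \max_{0 \neq f \in \mathcal{L}(P)} Z(f)$; the target inequality is then $Z(f) \leq L(q-1) + 2\sqrt{q} - 1$ for every nonzero $f$ supported on $P$.

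Fix such an $f$ and factor it into absolutely irreducible components over $\overline{\F_q}$, say $f = f_1 \cdots f_r$. Because Newton polytopes are multiplicative under polynomial multiplication, $\mathrm{Newt}(f) = \mathrm{Newt}(f_1) + \cdots + \mathrm{Newt}(f_r)$ is a Minkowski decomposition of a subpolytope of $P$, so the definition of full Minkowski length immediately gives $r \leq L$. Now I would separate the factors by the dimension of their Newton polytope. If $\mathrm{Newt}(f_i)$ is a line segment, then after a monomial (unimodular) change of coordinates $f_i$ takes the shape $x^a - c$, so its vanishing locus on the torus is either empty or a coset of a $1$-dimensional subtorus of size $q-1$; each segment factor therefore contributes at most $q-1$ points to $Z(f)$.

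For a factor $f_i$ whose Newton polytope is $2$-dimensional, I would apply the Weil--Serre bound to the normalization of the plane curve $V(f_i)$, obtaining at most $q + 1 + 2 g_i \sqrt{q}$ $\F_q$-rational points, where the arithmetic genus $g_i$ is bounded by the number of interior lattice points of $\mathrm{Newt}(f_i)$ via Khovanskii's genus formula, and hence controlled by the area $A$ of $P$ together with $L$. Subtracting points on the coordinate axes to restrict to $(\F_q^*)^2$, each such factor contributes roughly $q + O(\sqrt{q})$ torus points, not $q - 1$.

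Writing $s$ for the number of segment factors and $t = r - s$ for the $2$-dimensional ones, summing gives $Z(f) \leq s(q-1) + t\bigl(q + O(\sqrt{q})\bigr)$, and the key combinatorial observation is $s + t \leq L$: the presence of even one $2$-dimensional factor forces $s \leq L - 1$, freeing a single $(q-1)$ of budget to absorb the Weil excess of that factor. The main obstacle, and the source of the explicit hypothesis $q \geq \max(23, (c + \sqrt{c^2 + 5/2})^2)$ with $c = A/2 - L + 9/4$, lies precisely in the case $t \geq 2$: one must show that the accumulated genus contributions, which scale like $A/2 - L$, still fit into the $2\sqrt{q} - 1$ slack. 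Solving this threshold condition for $q$ is a quadratic inequality in $\sqrt{q}$ whose positive root is exactly $c + \sqrt{c^2 + 5/2}$, and checking that the residual small cases are handled by the uniform cutoff $q \geq 23$ completes the argument.
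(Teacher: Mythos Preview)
The paper does not prove this theorem: it is quoted verbatim from Soprunov--Soprunova \cite{SS} and used as a black box in the later minimum-distance computations. There is therefore no proof in the paper to compare your proposal against.

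That said, your outline is essentially the strategy of \cite{SS}: reduce to bounding $Z(f)$, factor $f$ into absolutely irreducible pieces so that the Newton polygons give a Minkowski decomposition inside $P$ (whence at most $L$ factors), bound segment factors by $q-1$ each, and control two-dimensional factors via the Hasse--Weil/Serre bound with genus governed by interior lattice points. One point that needs more care than you indicate: you factor over $\overline{\F_q}$ but then want to count $\F_q$-rational zeros, and the Weil bound in the form you quote applies to curves defined over $\F_q$. In \cite{SS} this is handled by observing that absolutely irreducible factors not defined over $\F_q$ come in Frobenius orbits and contribute very few $\F_q$-points (they lie in the intersection with a conjugate), so the dangerous factors are the $\F_q$-rational ones. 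Your final paragraph is also more of a narrative than an argument: the constants $9/4$ and $5/2$ in the threshold, and the cutoff $23$, come from a specific case analysis in \cite{SS} (bounding the total interior-point count across the two-dimensional summands via Pick's theorem and the area of $P$), not simply from ``solving a quadratic in $\sqrt{q}$.'' As a sketch of the method your proposal is sound; as a proof it would need those two pieces filled in.
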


\begin{defn} An \emph{exceptional triangle} is a lattice polygon that has exactly three lattice points on the boundary and exactly one lattice point in the interior. 
Note that any exceptional triangle will be lattice equivalent to the polygon $T$ given in Figure \ref{figure:ExceptionalTriangle}. \end{defn}

\begin{exmp}  
Below is a polygon which is the Minkowski sum of the exceptional triangle $T$ and a unit line segment.  

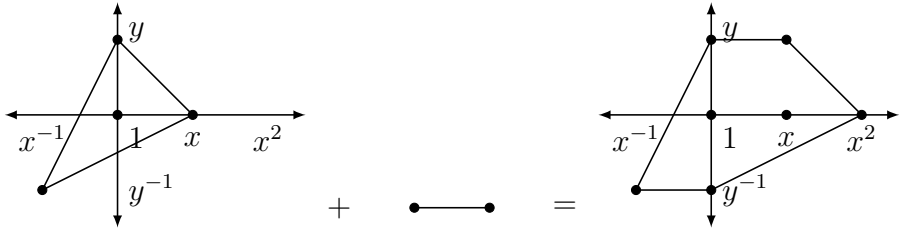
\begin{figure}[H]
    \begin{center}
    
    \begin{tikzpicture}[scale=1]
    \draw [semithick, <->, >=latex] (-1.5,0) -- (2.5,0);
    \draw [semithick, <->, >=latex] (0,-1.5) -- (0,1.5);
    \node [below] at (-1,0) {$x^{-1}$};
    \node [right] at (0,1.1) {$y$};
    \node [below right] at (0,-0.02) {$1$};
    \node [right] at (0,-1) {$y^{-1}$};
    \node [below] at (1,-0.09) {$x$};
    \node [below] at (2,0) {$x^2$};
    \draw [semithick, black] (-1,-1) -- (0,1) -- (1,0) -- cycle;
    \draw [fill] (-1,-1) circle [radius=0.06];
    \draw [fill] (0,0) circle [radius=0.06];
    \draw [fill] (1,0) circle [radius=0.06];
    \draw [fill] (0,1) circle [radius=0.06]; 
    \end{tikzpicture}  \begin{tikzpicture}[scale=1]
    \node at  (-1,2) {$+$};
    \node at  (2,2) {$=$};

    \draw [semithick, black] (0,2) -- (1,2)  -- cycle;
    \draw [fill] (0,2) circle [radius=0.06];
    \draw [fill] (1,2) circle [radius=0.06];
    \end{tikzpicture}  \begin{tikzpicture}[scale=1]
   \draw [semithick, <->, >=latex] (-1.5,0) -- (2.5,0);
    \draw [semithick, <->, >=latex] (0,-1.5) -- (0,1.5);
    \node [below] at (-1,0) {$x^{-1}$};
    \node [right] at (0,1.1) {$y$};
    \node [below right] at (0,-0.02) {$1$};
    \node [right] at (0,-1) {$y^{-1}$};
    \node [below] at (1,-0.09) {$x$};
    \node [below] at (2,0) {$x^2$};

    \draw [semithick, black] (-1,-1) -- (0,-1) -- (2,0) -- (1,1) -- (0,1)-- cycle;
    \draw [fill] (-1,-1) circle [radius=0.06];
    \draw [fill] (0,0) circle [radius=0.06];
    \draw [fill] (1,0) circle [radius=0.06];
    \draw [fill] (0,1) circle [radius=0.06];
    \draw [fill] (1,1) circle [radius=0.06];
    \draw [fill] (0,-1) circle [radius=0.06];
    \draw [fill] (2,0) circle [radius=0.06];

    \end{tikzpicture}
    \end{center}
    \caption{The Minkowski sum  of the exceptional triangle $T$ with a unit segment} \label{figure:MinkowskiSum}
    \end{figure}
\end{exmp}

Soprunov and Soprunova improved the bound of Theorem \ref{theorem:SSboundExc} in the case that the Minkowski decomposition of $P$ does not contain an exceptional triangle as one of the summands.  To state this result, recall that the Newton polytope of a polynomial $f$ is the convex hull of the exponent vectors of the monomials appearing in $f$.  For example the Newton polytope, $P_f$, of $f=ax + by+cx^{-1}y^{-1},$ where $a,b,c \in \F_q^*$, is the convex hull of three points: $P_f = \text{Conv}((1,0), (0,1), (-1,-1))$, which is the exceptional triangle $T$ depicted in Figure \ref{figure:ExceptionalTriangle}.  

\begin{theorem}[\cite{SS}] \label{theorem:SSboundNOExc} Let $P \subset \square_{q-1}$ be a lattice polygon with area $A$ and full Minkowski length $L.$ If  for every $f \in \mathcal{L}(P)$, there is no factorization  $f = f_1 \cdots f_L$, where the Newton polygon of one of the factors is an exceptional triangle, then  for $q \geq \max(37, \big(c + \sqrt{c^2 + 5/2}\big)^2)$, where $c = {A}/{2}-L +{11}/{4}$, the minimum distance of the toric surface code $C_P$ satisfies
\[d(C_p) \geq (q-1)^2-L(q-1).\]
\end{theorem}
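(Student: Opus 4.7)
The plan is to bound $Z(f) := \#\{a \in (\F_q^*)^2 : f(a)=0\}$ uniformly for $0 \ne f \in \mathcal{L}(P)$, since the evaluation-map description of $C_P$ gives $d(C_P) = (q-1)^2 - \max_f Z(f)$. The first step is to factor $f$ into absolutely irreducible Laurent polynomial factors $f = f_1 \cdots f_s$ over $\overline{\F_q}$. By Ostrowski's theorem on Newton polytopes of products, $P_{f_1} + \cdots + P_{f_s} = P_f \subseteq P$ as a Minkowski decomposition, so the number of positive-dimensional factors is bounded by the full Minkowski length $L(P) = L$. The no-exceptional-triangle hypothesis in $\mathcal{L}(P)$ then propagates to rule out any factor $f_i$ whose Newton polygon is an exceptional triangle whenever the decomposition realizes the extremal length $L$.

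The second step is to control each $Z(f_i)$ and sum. If $P_{f_i}$ is a segment, absolute irreducibility forces $f_i$ to be, up to a monomial, a linear form in a single new torus coordinate obtained from the direction of the segment, giving $Z(f_i) \leq q-1$. If $P_{f_i}$ is two-dimensional and not exceptional, I would apply the Hasse--Weil bound to a smooth projective model of the closure of $V(f_i)$ in the polarized toric surface attached to $P_{f_i}$, and then subtract the $\F_q$-points living on the toric boundary divisors. The non-exceptional condition is what makes this subtraction yield the clean estimate $Z(f_i) \leq q-1$; the exceptional case is precisely where the boundary correction and genus estimate combine to leave a residual $2\sqrt q - 1$, which is exactly the error term appearing in Theorem~\ref{theorem:SSboundExc}. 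Summing, $Z(f) \leq L(q-1)$, which gives $d(C_P) \geq (q-1)^2 - L(q-1)$.

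The explicit lower bound on $q$ is forced by the need to close the quadratic inequality $q - 2\sqrt{q} \geq \text{(area/length correction)}$ that powers the per-factor estimate; Pick's theorem relates the interior lattice count (hence the genus contribution) to the area $A$, and solving the resulting inequality in $\sqrt q$ produces the root condition $q \geq (c + \sqrt{c^2 + 5/2})^2$ with $c = A/2 - L + 11/4$, while the floor $q \geq 37$ handles small-$q$ anomalies where the smooth-model and toric-boundary counts degenerate. The main obstacle I expect is precisely this per-factor curve-point count in the two-dimensional non-exceptional case. Identifying the exceptional triangle as the \emph{unique} combinatorial obstruction to the clean $(q-1)$-bound requires a detailed reading of the toric geometry of the compactification of $V(f_i)$ and a classification of lattice polygons whose associated curves can carry an excess of $\F_q$-rational points on the toric boundary rather than in the torus itself. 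Once that per-factor input is established uniformly over $q$ in the stated range, the passage from Minkowski-length control of $P$ to the global bound on $Z(f)$ is essentially a bookkeeping step.
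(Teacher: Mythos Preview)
The paper does not supply its own proof of this theorem: it is quoted verbatim from \cite{SS} and used as a black box (see how it is invoked in the proof of Theorem~\ref{theorem:mindist}). So there is no in-paper argument to compare your proposal against. That said, your outline does follow the architecture of the Soprunov--Soprunova proof, and it is worth flagging where the sketch is imprecise.

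The central soft spot is your per-factor bound in the two-dimensional case. The assertion that ``if $P_{f_i}$ is two-dimensional and not exceptional, then $Z(f_i) \le q-1$'' is not true for an arbitrary two-dimensional Newton polygon: Hasse--Weil gives roughly $q-1 + 2g\sqrt{q}$ torus points, and $g$ (the interior lattice point count) can be positive. What makes the argument work is a case split on the number $s$ of absolutely irreducible factors. When $s = L$, each $P_{f_i}$ has Minkowski length $1$, and the classification of such polygons says $P_{f_i}$ is a primitive segment, a unit triangle, or an exceptional triangle; the hypothesis rules out the last, and for the first two $Z(f_i) \le q-1$ holds directly, so $Z(f) \le L(q-1)$. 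When $s < L$, individual factors can have larger Newton polygons (possibly with interior points, possibly even exceptional triangles, since the hypothesis only constrains length-$L$ factorizations), and the per-factor bound fails. It is precisely this $s < L$ regime that consumes the hypothesis $q \ge (c + \sqrt{c^2 + 5/2})^2$: one combines the area constraint $\sum A(P_{f_i}) \le A$ with Pick's theorem and Hasse--Weil to show $\sum Z(f_i) \le L(q-1)$ once $q$ is large enough. Your final paragraph gestures at this inequality but locates it in the wrong place, attaching it to the per-factor estimate rather than to the deficit case $s < L$. The paper's own application of the method to $P_7^{(7)}$ in the proof of Theorem~\ref{theorem:mindist} exhibits exactly this case split (three factors, two factors, irreducible) and shows how the $q$-threshold enters only in the non-maximal cases.
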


Finally we record a result of \cite{SS} which gives a bound on the number of zeros of an absolutely irreducible polynomial $f$, denoted by $Z(f)$, which depends on $q$ and the number of interior points and primitive edges of the Newton polygon of $f$.  A primitive edge of a polygon is an edge whose only lattice points are the endpoints. For example, the exceptional triangle depicted above in Figure \ref{figure:ExceptionalTriangle} has three primitive edges.

\begin{theorem}[\cite{SS}]\label{theorem:SSZero}
    Let $f$ be absolutely irreducible with Newton polygon $P_f.$ Then $$ Z(f) \leq q + 1 + \floor{2I(P_f)\sqrt{q}} - B'(P_f), $$ where $I(P_f)$ is the number of interior lattice points and $B'(P_f)$ is the number of primitive edges of $P_f.$
\end{theorem}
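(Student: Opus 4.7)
My plan is to combine the Hasse--Weil bound for smooth projective curves over $\F_q$ with a Newton-polygon bound on the geometric genus due to Baker. Since $f$ is absolutely irreducible, the affine curve $C:f=0$ in $(\overline{\F_q}^{*})^2$ is geometrically irreducible. I would take the closure $\bar{C}$ of $C$ inside the toric surface $X_{P_f}$ attached to the normal fan of $P_f$, and let $\pi\colon\tilde{C}\to\bar{C}$ denote the normalization, so $\tilde{C}$ is a smooth projective geometrically irreducible curve defined over $\F_q$.

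Three ingredients then need to be assembled. First, Hasse--Weil applied to $\tilde{C}$ gives $|\tilde{C}(\F_q)|\leq q+1+2g\sqrt{q}$, where $g$ is the geometric genus of $\tilde{C}$. Second, Baker's theorem (generalized by Khovanskii) yields $g\leq I(P_f)$. Third, I would count the $\F_q$-points of $\tilde{C}$ that lie over the toric boundary of $X_{P_f}$: each edge $E$ of $P_f$ corresponds to a torus-invariant divisor $D_E\subset X_{P_f}$, and the intersection $\bar{C}\cap D_E$ is controlled by the edge polynomial $f_E$, a one-variable polynomial whose degree equals the lattice length of $E$. When $E$ is primitive, $f_E$ is linear with both coefficients in $\F_q^{*}$, so it has a unique root in $\F_q^{*}$, producing an $\F_q$-rational point of $\tilde{C}$ lying over $D_E$; summing over the $B'(P_f)$ primitive edges gives at least $B'(P_f)$ boundary points in $\tilde{C}(\F_q)$. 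Combined with the fact that each point of $Z(f)$ lifts to an $\F_q$-point of $\tilde{C}$ in the preimage of the torus, one gets $Z(f)+B'(P_f)\leq q+1+2I(P_f)\sqrt{q}$, and the floor in the statement follows from integrality of $Z(f)$.

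The hardest step is the assertion that $|\tilde{C}(\F_q)\cap\pi^{-1}((\F_q^{*})^{2})|\geq Z(f)$. Smooth $\F_q$-points of $C$ lift uniquely to $\F_q$-points of $\tilde{C}$, but a singular $\F_q$-point of $C$ can fail to yield any $\F_q$-rational preimage: for example a node whose two branches are conjugate over $\F_{q^2}$. Resolving this requires either a genericity assumption on $f$ that rules out such ``bad'' singularities, or an Aubry--Perret-style Hasse--Weil bound for singular curves combined with an estimate on the total $\delta$-invariant controlled by $P_f$. A secondary subtlety is verifying that the unique root of each primitive edge polynomial really does correspond to an $\F_q$-rational place on $\tilde{C}$; this comes from the local toric geometry (for a primitive edge, $\bar{C}$ meets $D_E$ transversely at a single smooth point), but should be justified carefully to ensure the normalization does not disrupt rationality.
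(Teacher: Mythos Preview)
The paper does not prove this statement; it is quoted from Soprunov--Soprunova \cite{SS} and used as a black box throughout Section~\ref{sec:proofs}. Your outline is essentially the argument behind the result in \cite{SS}: a Hasse--Weil bound combined with Baker's inequality $g\le I(P_f)$ and a count of rational boundary points on the toric compactification.

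The singularity issue you flag is real and is the one place where your sketch is not yet a proof. The clean resolution is not to pass to the normalization at all. Work with the closure $\bar C\subset X_{P_f}$ directly: as a member of the linear system attached to $P_f$, its arithmetic genus is exactly $p_a=I(P_f)$ (Khovanskii's formula, equivalently adjunction on the toric surface), and the Aubry--Perret form of the Weil bound for a possibly singular absolutely irreducible projective curve gives
\[
|\bar C(\F_q)|\ \le\ q+1+2g\sqrt{q}+(p_a-g)\ \le\ q+1+2p_a\sqrt{q}\ =\ q+1+2I(P_f)\sqrt{q}.
\]
Now $Z(f)$ counts the $\F_q$-points of $\bar C$ lying in the open torus, and each primitive edge $E$ contributes one $\F_q$-point of $\bar C$ on the open one-dimensional orbit of the boundary divisor $D_E$, since the edge polynomial $f_E$ is linear with both endpoint coefficients in $\F_q^{*}$ and hence has a unique root in $\F_q^{*}$. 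These $B'(P_f)$ boundary points are pairwise distinct and disjoint from the torus points, so $Z(f)+B'(P_f)\le|\bar C(\F_q)|$, and the stated inequality (with the floor, by integrality) follows. Staying on $\bar C$ rather than $\tilde C$ sidesteps both concerns you raise: there is no need to lift singular torus points, and the primitive-edge points are already $\F_q$-rational on $\bar C$ regardless of smoothness there.
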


\section{Toric surface codes of dimension $7$}\label{sec:proofs}

Using the lattice equivalence of polygons with $k=6$ lattice points, one can construct the $22$ equivalence classes of polygons with $k=7$ lattice points by finding all possible ways to add an extra point.  This is the analogous process that was employed in  \cite{LYZZShort} to construct the fourteen polygons with $6$ lattice points from those with $5$ lattice points; see \cite[Theorem 1.1]{HLYZZ} for a sketch of the construction for $k=7$.  Following the notation of \cite{LSchwarz}, \cite{dim5} and \cite{LYZZShort}, the polygons are denoted by $P_{k}^{(i)}$, where $k$ is the number of lattice points and $i$ denotes the equivalence class.

\begin{theorem}\label{theorem:k7} \cite[Theorem 1.1]{HLYZZ} Every toric surface code with $k=7$ is monomially equivalent to a code generated by one of the $22$ polygons in Figure \ref{fig:k7}. \end{theorem}

\bigskip

To determine whether the $22$ polygons yield monomially inequivalent codes, we first compute (or bound) the minimum distance of each code.   This is also the first step taken in  \cite[Prop 3.5]{HLYZZ}, however they make an error in computing the minimum distance of the codes arising from the polygons $P_7^{(i)}$, for $i=16,18$ and $19$, which we correct.  Additionally we verify that the minimum distance of $C_{P_7^{(22)}}$ is $(q-2)(q-3)$, whereas \cite{HLYZZ} had only bounded the minimum distance from below by $(q-2)(q-3)$.

\begin{theorem}\label{theorem:mindist}
    The polygons with $k=7$ lattice points generate codes with minimum distances given by the formulas in Table \ref{table:2} below for sufficiently large $q$.
\end{theorem}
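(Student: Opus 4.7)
The plan is to determine $d(C_{P_7^{(i)}})$ for each of the $22$ polygons case-by-case, organized by which of the three tools from Section~\ref{sec:prelim} yields the exact value. First, I would sort the $P_7^{(i)}$ by whether they are lattice equivalent to a rectangle $P_{k,l}^\square$ or a right triangle $P_{k,l}^\triangle$. For such polygons, Theorem~\ref{theorem:LStriangle} provides the minimum distance immediately, and because lattice equivalent polygons yield monomially equivalent codes with equal minimum distance, several entries of Table~\ref{table:2} can be read off without further work.

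For the remaining polygons the strategy splits into establishing a lower bound and a matching upper bound on $d(C_P)$. For the lower bound, I would compute the full Minkowski length $L(P)$ by enumerating every subpolygon $Q \subseteq P$ containing at least two lattice points and finding, in each case, the largest number of positive-dimensional summands in a Minkowski decomposition of $Q$. I would simultaneously record whether any subpolygon decomposes as an exceptional triangle plus a positive-dimensional polygon, since this determines whether Theorem~\ref{theorem:SSboundNOExc} applies, giving $d(C_P)\geq (q-1)^2 - L(P)(q-1)$ for sufficiently large $q$, or only the weaker Theorem~\ref{theorem:SSboundExc}, which carries an extra $-2\sqrt{q}+1$ correction. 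Pick's Theorem~\ref{theorem:pick} is convenient for verifying the area hypotheses required by each bound.

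For the matching upper bound, I would exhibit an explicit $f \in \mathcal{L}(P)$ attaining the expected number of zeros in $(\F_q^*)^2$. The natural candidate is a product $f = f_1\cdots f_L$ whose factors' Newton polytopes realize a maximal Minkowski decomposition of a subpolytope of $P$: each linear factor contributes $q-1$ zeros in the torus, accounting for the $L(P)(q-1)$ term. When the exceptional-triangle case is active, one factor is chosen to be an absolutely irreducible polynomial whose Newton polygon is an exceptional triangle; Theorem~\ref{theorem:SSZero}, applied with $I(P_f)=1$, then supplies the additional $\lfloor 2\sqrt{q}\rfloor$ zeros needed to saturate the weaker lower bound.

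The main obstacle is the correct computation of $L(P)$ and the exceptional-triangle check for the more complicated polygons (hexagons and pentagons with several interior points), since overlooking a single Minkowski decomposition of a subpolygon could shift a polygon into the wrong case and produce an incorrect formula; this step will require a careful, essentially exhaustive bookkeeping argument. A secondary but real difficulty is constructing, in each exceptional-triangle case, a specific $f$ whose irreducible factor actually attains the zero bound of Theorem~\ref{theorem:SSZero} over $\F_q$, since this saturation requires coefficients chosen depending mildly on $q$ and is the origin of the ``sufficiently large $q$'' hypothesis in the statement.
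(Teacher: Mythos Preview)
Your outline is sound for the polygons whose minimum distance is $(q-1)(q-1-L)$, but it breaks down for $P_7^{(5)},P_7^{(6)},P_7^{(7)},P_7^{(12)}$ and for $P_7^{(13)}$, and these are precisely the rows of Table~\ref{table:2} that do not fit the pattern you describe.

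For $P_7^{(5\text{--}7,12)}$ one has $L=3$, yet Table~\ref{table:2} records $d=(q-2)(q-3)=(q-1)^2-(3(q-1)-2)$, not $(q-1)(q-4)$. Your upper-bound step assumes that a product of $L$ linear factors yields $L(q-1)$ zeros, but this is only true when the corresponding zero loci in $(\F_q^*)^2$ are pairwise disjoint, i.e.\ when the maximal Minkowski subpolygon is a lattice segment. In these four polygons no length-$3$ segment exists; every maximal decomposition mixes nonparallel primitive edges, so the three zero curves always intersect and the best you can achieve is $3(q-1)-2$ zeros. Correspondingly, the lower bound from Theorem~\ref{theorem:SSboundNOExc} alone is off by $2$, and the paper closes this gap by a separate case analysis: if $f$ has three irreducible factors the intersection count gives $Z(f)\le 3(q-1)-2$; if $f$ is irreducible one invokes Theorem~\ref{theorem:SSZero}; and if $f$ has two factors one follows the proof of \cite[Prop.~2.4]{SS} to bound $Z(f)$. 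Your plan contains no mechanism to produce this sharper bound.

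For $P_7^{(13)}$ you propose to saturate the exceptional-triangle bound by choosing an irreducible factor and invoking Theorem~\ref{theorem:SSZero} to ``supply the additional $\lfloor 2\sqrt{q}\rfloor$ zeros.'' But Theorem~\ref{theorem:SSZero} is an \emph{upper} bound on $Z(f)$, not a construction; it gives no way to produce a polynomial meeting the Hasse--Weil bound, and indeed generically one cannot. The paper does not obtain an exact formula here: Table~\ref{table:2} records only the inequality $(q-1)(q-3)\ge d(C_{P_7^{(13)}})>(q-2)(q-3)$, with the upper endpoint coming from the supercode $C_{P_6^{(9)}}$ and the lower endpoint from Theorem~\ref{theorem:SSboundExc}.
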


\begin{center}
\begin{longtable}[c]{|| c | c | c ||}
\caption{Minimum distances  \label{table:2}}\\

    \hline
    lattice equivalence class & minimum distance formula & bound on $q$ \\
    \hline\hline
    \endfirsthead

    \hline
    lattice equivalence class & minimum distance formula & bound on $q$ \\
    \hline\hline
    \endhead
    
    \hline
    \endfoot
         
    \hline\hline
    \endlastfoot
    
    $P_7^{(1)}$ & $(q-1)(q-7)$ & all $q$ \\
    \hline
    $P_7^{(2)}$ & $(q-1)(q-6)$ & all $q$ \\
    \hline
    $P_7^{(3)}$ & $(q-1)(q-5)$ & all $q$ \\
    $P_7^{(4-9)}$ & $(q-1)(q-5)$ & $q \geq 37$ \\
    \hline
    $P_7^{(10-11)}$ & $(q-1)(q-4)$ & all $q$ \\
    $P_7^{(12-15)}$ & $(q-1)(q-4)$ & $q \geq 37$ \\
    \hline
    $P_7^{(18,19, 22)}$ & $(q-2)(q-3)$ & $q \geq 5$ \\
    $P_7^{(16)}$ & $(q-2)(q-3)$ & $q \geq 9$ \\
    \hline
    $P_7^{(17)}$ & $(q-1)(q-3) \geq d > (q-2)(q-3)$ & $q \geq 23$ \\
    \hline
    $P_7^{(20,21)}$ & $(q-1)(q-3)$ & $q \geq 37$ \\
\end{longtable}
\end{center}

\begin{proof}
For the minimum distances of the codes coming  from $P_7^{(i)}$ for $1 \leq i\leq 15$ and $i=17, 20, 21$ we refer the reader to  \cite[Prop 3.5]{HLYZZ}.

   First, we consider codes coming from $P_7^{(i)}$, $i=18,19,22$.  We first note that we can find a polynomial in $\mathcal{L}(P_7^{(i)})$, $i=18,19,22$, with $3(q-1)-2$ zeros.  Let $a,b,c \in \F_q^*$,  the polynomial $(x-a)(y-b)(y^{-1}-c) \in \mathcal{L}(P_7^{(18)})$ has $3(q-1)-2$ zeros if $b\neq c^{-1}$, the polynomial $(x-a)(x-b)(y-c) \in \mathcal{L}(P_7^{(19)})$ has $3(q-1)-2$ zeros if $a\neq b$, and the polynomial $(x-a)(y-bx^{-1})(y^{-1}-c) \in \mathcal{L}(P_7^{(22)})$ has $3(q-1)-2$ zeros if and only if $a=bc$ (else it will have $3(q-1)-3$ zeros).  
   
   We next show that for $q \geq 5$, any $f \in \mathcal{L}(P_7^{(i)})$, $i=18,19,22$ will have at most $3(q-1)-2$ zeros.  This then will give us that $ d(C_{P_7^{(i)}}) = (q-1)^2 - (3(q-1)- 2) = (q-2)(q-3)$.

Note that each $P_7^{(i)}$, $i=18,19,22$, has full Minkowski length $L=3$, and any maximal decomposition in $P_7^{(i)}$ will be a Minkowski sum of three primitive edges, which implies that every polynomial with the largest number of absolutely irreducible factors (three) will have at most $3(q-1) - 2$ zeros in $(\F_{q}^*)^2$.

Next consider the case where $f$ is absolutely irreducible, with Newton polytope $P_f$.  Since each $P_7^{(i)}$, $i=18,19,22$, has one interior point and $P_f \subseteq P_7^{(i)}$, the number of interior points of $P_f$, $I(P_f)$, is at most $1$.  Then by Theorem \ref{theorem:SSZero}, 
$$Z(f) \leq q+1 + \lfloor 2I(P_f)\sqrt{q}\rfloor - B'(P_f) \leq q+1 +\lfloor 2\sqrt{q} \rfloor.$$
For $q \geq 5,$ $ q+1 +\lfloor 2\sqrt{q} \rfloor \leq 3(q-1)-2.$

Finally consider the case where $f$ factors into two absolutely irreducible polynomials.  One can apply Propositon 2.4 from \cite{SS}, to get that for $q \geq 41$, $Z(f) \leq 3(q-1)-2$, but following the proof of \cite[Prop 2.4]{SS}, we can bring the bound on $q$ to $q \geq 5$.  Let $f=f_1f_2,$ and let $P_i$ be the Newton polygon of $f_i$, so that $P_f = P_1 + P_2$.  Then, as in the proof of  \cite[Prop 2.4]{SS}, $L(P_1) \leq 2$ and $L(P_2) =1$.  

\begin{itemize}
\item If $L(P_1)$ and $L(P_2)$ are both one, then $P_1$ and $P_2$ are both strongly indecomposable triangles or lattice segments.  Note that neither can be the exceptional triangle, since the Minkowski sum of the exceptional triangle with a line segment or simplex is not contained in $P_7^{(i)}$.  So if $L(P_1) = L(P_2) =1$, then 
$$ Z(f) \leq 2(q-1) \leq 3(q-1) - 2. $$

\item If $L(P_1)=2$ and $L(P_2)=1$, then as before $P_2$ is either the two-simplex or a lattice segment.  Since the Minkowski sum of $P_1$ and $P_2$ must be contained in $P_7^{(i)}$, for $i=18,19,22$, the only possibility for $P_1$ is the unit square, which has no interior points. 
Applying Theorem \ref{theorem:SSZero} to $f_1$ gives $Z(f_1) \leq q+1 +\lfloor 2\cdot 0\sqrt{q} \rfloor - B'(P_1) \leq q+1,$ and $Z(f_2) \leq q-1$, thus
$$ Z(f)  \leq q+1 + q-1 = 2q  $$
For $q \geq 5$, this is smaller than $3(q-1)-2$.
\end{itemize}
   
For the code coming from $P_7^{(16)}$ the minimum distance is computed similarly to the code above coming from $P_7^{(i)}$, $i=18,19,22$.  The difference is that $P_7^{(16)}$ has two interior points, whereas $P_7^{(i)}$, $i=18,19,22$, have only one.  This will change our bound on $q$ slightly to determine that $ d(C_{P_7^{(16)}}) = (q-1)^2 - (3(q-1)- 2) = (q-2)(q-3)$, for $q \geq 9.$  First note the polynomial $f=(x-a)(x-b)(y-c) \in \mathcal{L}(P_7^{(16)})$, with $a,b,c \in \F_q^*$, $a\neq b$ has $3(q-1)-2$ zeros.  As above we'll show any other polynomial $f\in \mathcal{L}(P_7^{(16)})$ has $Z(f) \leq 3(q-1)-2$ for $q \geq 9.$  As in the case of $P_7^{(i)}$, $i=18,19,22$, every polynomial with the largest number of absolutely irreducible factors (three) will have at most $3(q-1) - 2$ zeros in $(\F_{q}^*)^2$. If $f$ is absolutely irreducible, with Newton polytope $P_f$, then the number of interior points of $P_f$, $I(P_f)$, is at most $2$.  Then by Theorem \ref{theorem:SSZero}, 
$$Z(f) \leq q+1 + \lfloor 2I(P_f)\sqrt{q}\rfloor - B'(P_f) \leq q+1 +\lfloor 4\sqrt{q} \rfloor \leq 3(q-1)-2, \text{ for } q \geq 9.  $$
In the case where $f$ factors into two absolutely irreducible polynomials, $f=f_1f_2$, we have that $L(P_1) \leq 2$ and $L(P_2) =1.$ If $L(P_1)=L(P_2)=1$, then as above, $Z(f) \leq 2(q-1) \leq 3(q-1)-2$. If $L(P_1)=2$ and $L(P_2)=1$, then as above $P_1$ must be the unit square, which has no interior points.  Applying Theorem \ref{theorem:SSZero} to $f_1$ gives 
$$Z(f) \leq Z(f_1) + Z(f_2) \leq q+1 +\lfloor 2\cdot 0\sqrt{q} \rfloor - B'(P_1) +q-1 \leq   q+1 + q-1 = 2q  $$
For $q \geq 5$, this is smaller than $3(q-1)-2$.

\end{proof}

\bigskip

If two codes have different minimum distances, we know that the codes are not monomially equivalent. Based on the previous proposition, we have that $P_7^{(1)}$ and $P_7^{(2)}$ give codes that are not monomially equivalent to any other. For all of the others though, there are many polygons that yield codes whose minimum distances coincide. We will look at these groups of polygons in turn and use finer invariants to distinguish the codes from each other.  We will focus on the number of codewords of particular weights.  Given a code $C_P$, denote by $n_1(C_P)$ the number of codewords of weight $(q-1)^2-(2q-2)$, $n_2(C_P)$ the number of codewords of weight $(q-1)^2-(2q-3)$, and $n_3(C_P)$ the number of codewords of weight $(q-1)^2-(3q-5)$. The general strategy is to analyze a group of polygons that yield codes with the same minimum distance and show that one of the above invariants differs.  In \cite{HLYZZ} a similar analysis is completed; they distinguish various codes using the invariants $n_1$ and $n_2$. Slightly more concise arguments can be given by considering $n_3$ in addition to $n_1$ and $n_2$, but we omit the proofs.  

\bigskip

\begin{proposition}\label{prop:prop1}

    \begin{enumerate}
        \item \cite[Prop 3.6, 3.7]{HLYZZ} $P_7^{(3-15)}$ all generate monomially inequivalent codes for $q>9.$
        \item The codes with minimum distance $(q-2)(q-3)$, generated by $P_7^{(16,18,19,22)}$, yield monomially inequivalent codes for  $q>9.$
        \item The codes with minimum distance $d$, $(q-1)(q-3) \geq d > (q-2)(q-3)$, generated by $P_7^{(17, 20,21)}$, yield monomially inequivalent codes for  $q>9.$
    \end{enumerate}
\end{proposition}

Note that statement (2) is similar to \cite[Prop 3.7]{HLYZZ}, but because we correctly compute the minimum distances of $C_{P_7^{(16,18,19)}},$ we are able to distinguish $C_{P_7^{(10)}}$ from $C_{P_7^{(19)}}.$ To distinguish $\P{7}{22}$ from $\P{7}{16}$, $\P{7}{18}$ and $\P{7}{19}$, one can first compute the invariant $n_3$.  More precisely,  note that polynomials of the form $d(y-a)(x-b)(x-c)$, with $b \neq c$, have exactly $3q-5$ zeros and are in $\mathcal{L}(\P{7}{i})$ for $i=16,19$.  Similarly, polynomials of the form $dy^{-1}(y-a)(y-b)(x-c)$, with $a \neq b$, have exactly $3q-5$ zeros and are in $\mathcal{L}(\P{7}{18})$.   Because there are $\binom{q-1}{2} (q-1)^2$ polynomials of this kind, there are at least as many words of weight $(q-1)^2-(3q-5)$ in $C_{\P{7}{16,18,19}}.$  In the proof that $C_{\P{7}{22}}$ has minimum distance $(q-1)^2 - (3q-5),$ Theorem \ref{theorem:mindist}, we show that for $q \geq 5,$ the only polynomials in  $\mathcal{L}(\P{7}{22})$ with $3q-5$ zeros are those of the form $d(x-a)(y-bx^{-1})(y^{-1}-c),$ where $a = bc$ and $a,b,c,d \in \F_q^*$. There are exactly $(q-1)^3$ of these polynomials. Since $(q-1)^3 < \binom{q-1}{2} (q-1)^2,$ this shows that $\P{7}{22}$ yields  a different code than those coming from $\P{7}{16}$, $\P{7}{18}$ or $\P{7}{19}$. As in the proof of  \cite[Prop 3.7]{HLYZZ}, one can then use $n_1$ and $n_2$ to distinguish $\P{7}{16}$, $\P{7}{18}$ and $\P{7}{19}$.

Statement (3) is an analogue of \cite[Prop 3.8]{HLYZZ}, which also considers the  code generated by $P_7^{(22)}.$ Because we computed the minimum distance of this code exactly, we have already distinguished it from the codes arising from $P_7^{(17)}, P_7^{(20)}$ and $P_7^{(21)}$.

Now we compile the results of the previous propositions to prove the main theorem, and use Sage to address monomial equivalence over fields of small $q$. 
Comparing with \cite[Theorem 1.2]{HLYZZ}, we have  added to their classification, by addressing the case of $C_{P_7^{(10)}}$  and $C_{P_7^{(19)}}$ over $\F_{29}$.  In this case, the Conjecture \cite[Conjecture 1.1]{HLYZZ} is false: $C_{P_7^{(10)}}$  and $C_{P_7^{(19)}}$ yield monomially inequivalent  codes over $\F_{29}$ (and in fact over any finite field), as the two codes have different minimum distances.    

\begin{theorem}\label{theorem:theorem2}
The $22$ polygons generate monomially inequivalent codes over $\F_q$, for all $q$, with the following exceptions: \begin{enumerate}
    \item[(a)] $P_{7}^{(4)}$ and $P_{7}^{(8)}$ yield monomially equivalent  codes over $\F_7$, 
    \item[(b)] $P_{7}^{(5)}$ and $P_{7}^{(7)}$ yield monomially equivalent  codes over $\F_7$, 
    \item[(c)] $P_{7}^{(12)}$ and $P_{7}^{(13)}$ yield monomially equivalent  codes over $\F_7$,
\end{enumerate} The monomial equivalence of the following two cases remains open: \begin{enumerate}
    \item[(d)] $P_{7}^{(4)}$ and $P_{7}^{(7)}$ over $\F_8$, 
    \item[(e)] $P_{7}^{(6)}$ and $P_{7}^{(5)}$ over $\F_8$. 
 \end{enumerate}
\end{theorem}

\begin{rmk}
In both the cases of $k=6$ and $k=7$ there are pairs of lattice inequivalent polytopes yielding monomially equivalent codes over $\F_7$, whereas the question of monomial equivalence over $\F_8$ remains open.  Computer checks using Sage and GAP can verify that $C_{P_6^{(4)}}$ and $C_{P_6^{(5)}}$ share the same enumerator polynomial  over $\F_8$, as do the pairs $C_{P_7^{(4)}}$ and $C_{P_7^{(7)}}$, and $C_{P_7^{(6)}}$ and  $C_{P_7^{(5)}}$.  While a shared enumerator polynomial does not guarantee monomial equivalence, it does provide compelling evidence that the pairs do yield monomially equivalent codes over $\F_8$.  For the pairs over $\F_7$, Joyner provides code to verify the monomial equivalence \cite{Joyner04}.  Attempts were made to extend this to codes over $\F_8$, but remained unsuccessful.  
\end{rmk}
       
\begin{rmk}
Employing a similar strategy as in Theorem \ref{theorem:k7} one can construct all the lattice inequivalent polygons with $8$ lattice points.  There are $42$ such polygons, so every toric surface code of dimension $k=8$, will be monomially equivalent to a code generated by one of the $42$ such polygons.
\end{rmk}

\section{Acknowledgements}
This research was conducted at the NSF REU Site (DMS-1659203) in Mathematical Analysis and Applications at the University of Michigan-Dearborn. We would like to thank the National Science Foundation, National Security Agency, University of Michigan-Dearborn (SURE 2019), and the University of Michigan-Ann Arbor for their support. Additionally, we  thank the other participants of the REU program for fruitful conversations on this topic, as well as the anonymous referee for their constructive feedback.

\bibliography{REUbib}

@unpublished{LYZZComplete,
	Author = {Luo, Xue and Yau, Stephen S.-T. and Zhang, Mingyi and Zuo, Huaiqing},
	Date-Added = {2019-07-03 14:43:50 +0000},
	Date-Modified = {2019-07-03 14:45:42 +0000},
	Note = {arXiv:1402.0060},
	Title = {On classification of toric surface codes of low dimension},
	Year = {2014},
	Bdsk-Url-1 = {https://doi.org/10.1016/j.ffa.2014.11.007},
	Bdsk-Url-2 = {http://dx.doi.org/10.1016/j.ffa.2014.11.007}}

@article{dim5,
	Author = {Yau, Stephen S.-T. and Zuo, Huaiqing},
	Date-Added = {2019-07-03 14:42:08 +0000},
	Date-Modified = {2019-07-03 14:43:12 +0000},
	Doi = {10.1007/s00200-009-0096-2},
	Fjournal = {Applicable Algebra in Engineering, Communication and Computing},
	Issn = {0938-1279},
	Journal = {Appl. Algebra Engrg. Comm. Comput.},
	Mrclass = {94B27},
	Mrnumber = {2511885},
	Mrreviewer = {Seon Jeong Kim},
	Number = {2},
	Pages = {175--185},
	Title = {Notes on classification of toric surface codes of dimension 5},
	Url = {https://doi.org/10.1007/s00200-009-0096-2},
	Volume = {20},
	Year = {2009},
	Bdsk-Url-1 = {https://doi.org/10.1007/s00200-009-0096-2},
	Bdsk-Url-2 = {http://dx.doi.org/10.1007/s00200-009-0096-2}}

@article{SS,
	Author = {Soprunov, Ivan and Soprunova, Jenya},
	Date-Added = {2019-07-03 14:41:57 +0000},
	Date-Modified = {2019-07-03 14:42:26 +0000},
	Doi = {10.1137/080716554},
	Fjournal = {SIAM Journal on Discrete Mathematics},
	Issn = {0895-4801},
	Journal = {SIAM J. Discrete Math.},
	Mrclass = {94B27 (14G50 52B20)},
	Mrnumber = {2476837},
	Mrreviewer = {Matthias Beck},
	Number = {1},
	Pages = {384--400},
	Title = {Toric surface codes and {M}inkowski length of polygons},
	Url = {https://doi.org/10.1137/080716554},
	Volume = {23},
	Year = {2008/09},
	Bdsk-Url-1 = {https://doi.org/10.1137/080716554},
	Bdsk-Url-2 = {http://dx.doi.org/10.1137/080716554}}

@article{LYZZShort,
	Author = {Luo, Xue and Yau, Stephen S.-T. and Zhang, Mingyi and Zuo, Huaiqing},
	Date-Added = {2019-07-03 14:41:42 +0000},
	Date-Modified = {2019-07-03 14:42:53 +0000},
	Doi = {10.1016/j.ffa.2014.11.007},
	Fjournal = {Finite Fields and their Applications},
	Issn = {1071-5797},
	Journal = {Finite Fields Appl.},
	Mrclass = {94B27 (14G50 52B20)},
	Mrnumber = {3317722},
	Mrreviewer = {Seon Jeong Kim},
	Pages = {90--102},
	Title = {On classification of toric surface codes of low dimension},
	Url = {https://doi.org/10.1016/j.ffa.2014.11.007},
	Volume = {33},
	Year = {2015},
	Bdsk-Url-1 = {https://doi.org/10.1016/j.ffa.2014.11.007},
	Bdsk-Url-2 = {http://dx.doi.org/10.1016/j.ffa.2014.11.007}}

@article{HLYZZ,
Author = {Hussain, Naveed and Luo, Xue and  Yau, Stephen S.-T. and Zhang, Mingyi and Zuo, Huaiqing},
TITLE = {On classification of toric surface codes of dimension seven},
   JOURNAL = {Comm. Anal. Geom.},
  FJOURNAL = {Communications in Analysis and Geometry},
    VOLUME = {28},
      YEAR = {2020},
    NUMBER = {2},
     PAGES = {263 – 319},
    DOI = { https://dx.doi.org/10.4310/CAG.2020.v28.n2.a3},
}

@article{Joyner04,
	Author = {Joyner, David},
	Date-Added = {2019-05-22 18:43:23 +0000},
	Date-Modified = {2019-05-22 18:58:08 +0000},
	Doi = {10.1007/s00200-004-0152-x},
	Fjournal = {Applicable Algebra in Engineering, Communication and Computing},
	Issn = {0938-1279},
	Journal = {Appl. Algebra Engrg. Comm. Comput.},
	Mrclass = {94B25 (94B27)},
	Mrnumber = {2142431},
	Mrreviewer = {Sapna Jain},
	Number = {1},
	Pages = {63--79},
	Title = {Toric codes over finite fields},
	Url = {https://doi.org/10.1007/s00200-004-0152-x},
	Volume = {15},
	Year = {2004},
	Bdsk-Url-1 = {https://doi.org/10.1007/s00200-004-0152-x},
	Bdsk-Url-2 = {http://dx.doi.org/10.1007/s00200-004-0152-x}}

@article{LSchwarz,
	Author = {Little, John and Schwarz, Ryan},
	Date-Added = {2019-05-22 18:24:09 +0000},
	Date-Modified = {2019-05-22 18:24:28 +0000},
	Doi = {10.1007/s00200-007-0041-1},
	Fjournal = {Applicable Algebra in Engineering, Communication and Computing},
	Issn = {0938-1279},
	Journal = {Appl. Algebra Engrg. Comm. Comput.},
	Mrclass = {94B27 (52B20)},
	Mrnumber = {2322944},
	Mrreviewer = {Jos\'{e} Ignacio Farr\'{a}n},
	Number = {4},
	Pages = {349--367},
	Title = {On toric codes and multivariate {V}andermonde matrices},
	Url = {https://doi.org/10.1007/s00200-007-0041-1},
	Volume = {18},
	Year = {2007},
	Bdsk-Url-1 = {https://doi.org/10.1007/s00200-007-0041-1},
	Bdsk-Url-2 = {http://dx.doi.org/10.1007/s00200-007-0041-1}}

@article{Hansen,
	Author = {Hansen, Johan P.},
	Date-Added = {2019-05-22 18:22:50 +0000},
	Date-Modified = {2019-05-22 18:23:18 +0000},
	Doi = {10.1007/s00200-002-0106-0},
	Fjournal = {Applicable Algebra in Engineering, Communication and Computing},
	Issn = {0938-1279},
	Journal = {Appl. Algebra Engrg. Comm. Comput.},
	Mrclass = {14G50 (14M25 94B27)},
	Mrnumber = {1953195},
	Mrreviewer = {John B. Little},
	Number = {4},
	Pages = {289--300},
	Title = {Toric varieties {H}irzebruch surfaces and error-correcting codes},
	Url = {https://doi.org/10.1007/s00200-002-0106-0},
	Volume = {13},
	Year = {2002},
	Bdsk-Url-1 = {https://doi.org/10.1007/s00200-002-0106-0},
	Bdsk-Url-2 = {http://dx.doi.org/10.1007/s00200-002-0106-0}}

\end{document}